\newtheorem{theorem}{Theorem}[section]
\newtheorem{lemma}[theorem]{Lemma}
\newtheorem{cor}[theorem]{Corollary}
\theoremstyle{definition}
\theoremstyle{remark}
\newtheorem{remark}[theorem]{Remark}
\numberwithin{equation}{subsection}
\theoremstyle{plain}
\newtheorem*{ack}{Acknowledgement}
\newtheorem{question}{Question}
\newtheorem*{lemmau}{Lemma}
\def\F{\mathbb F}
\def\V{\mathbb V}
\def\W{\mathbb W}
\def\F{\mathbb F}
\def\V{\mathbb V}
\def\W{\mathbb W}
\newcommand{\secref}[1]{section~\ref{#1}}
\newcommand{\thmref}[1]{Theorem~\ref{#1}}
\newcommand{\lemref}[1]{Lemma~\ref{#1}}
\newcommand{\eqnref}[1]{~{\textrm(\ref{#1})}}
\begin{document}
 \title[Invariant form under a linear map]{Existence of an invariant  form under  a linear map}
\author[Krishnendu Gongopadhyay \and Sudip Mazumder]{Krishnendu Gongopadhyay \and Sudip Mazumder}
\address{Indian Institute of Science Education and Research (IISER) Mohali, Sector 81, S.A.S. Nagar, Punjab 140306, India}
\email{krishnendug@gmail.com, krishnendu@iisermohali.ac.in}
\address{Department of Mathematics, Jadavpur University, Jadavpur, Kolkata 700032 }
\email{topologysudip@gmail.com}
\subjclass[2010]{Primary 15A04; Secondary 11E39}
\keywords{linear map, hermitian form, isometry}
\begin{abstract}

Let $\F$ be a field of characteristic different from $2$ and $\V$ be a vector space over $\F$.  Let $J: \alpha \to \alpha^J$ be a fixed involutory automorphism on $\F$. 
 In this paper we answer the following question: given an invertible linear map $T: \V \to \V$, when does the vector space $\V$ admit a $T$-invariant non-degenerate $J$-hermitian, resp. $J$-skew-hermitian, form? 
\end{abstract}

\maketitle

\section{Introduction}\label{intro} 
Let $\F$ be a  field of characteristic different from $2$.  Let $J: \alpha \to \alpha^J$ be a fixed involutory automorphism on $\F$, i.e.  $(\alpha+ \beta)^J= \alpha^J + \beta^J$, $(\alpha \beta)^J= \alpha^J \beta^J$, $(\alpha^{J})^J=\alpha$.  If there exists a non-zero $\alpha$ such that $J(\alpha)=\alpha^J=-\alpha$, we say:  ``$x^J=-x$ has a solution in $\F$".  This is always the case if $J$ is non-trivial.  Let $\V$ be a finite dimensional vector space over $\F$. In this paper we ask the following question.
\begin{question}\label{q1}
Given an invertible linear map $T: \V \to \V$, when does the vector space $\V$ over $\F$ admit a $T$-invariant non-degenerate $J$-hermitian, resp. $J$-skew-hermitian, form? 
\end{question}
We have answered the question in this note. This generalizes earlier work by  Gongopadhyay and Kulkarni \cite{gk} where the authors obtained conditions for an invertible linear map to admit an invariant non-degenerate quadratic and symplectic form assuming that the underlying field is of large characteristic. de Seguins Pazzis \cite{clement} extended the work of \cite{gk} over arbitrary characteristic.  The technicalities are slightly different in these works due to the underlying field characteristic. In this work we follow ideas from \cite{gk}.

\medskip  Let $\overline \F$ be the algebraic closure of $\F$. 
Let $f(x)=\sum_{i=0}^d a_i x^i$, $a_d=1$, be a monic polynomial of degree $d$ over $\F$ such that $-1$, $0$, $1$ are not its roots. 
The \emph{dual} of $f(x)$ is defined to be the polynomial $f^{\ast}(x)=(f(0)^J)^{-1}x^d f^J(x^{-1})$, where $f^J(x)=\sum_{i=0}^d a_i^J x^i$.  Thus,  $f^{\ast}(x)=\frac{1}{a_0^J} \Sigma_{i=0}^d a_{d-i}^J x^{i}$. In other words, if $\alpha$ in $\overline \F$ is a root of $f(x)$ with multiplicity $k$, then $(\alpha^J)^{-1}$ is a root of $f^{\ast}(x)$ with the same multiplicity. The polynomial $f(x)$ is said to be \emph{self-dual} if $f(x)=f^{\ast}(x)$. 

Let $T: \V \to \V$ be a linear transformation.  A $T$-invariant subspace is said to be \emph{indecomposable} with respect to $T$, or simply \emph{$T$-indecomposable} if it can not be expressed as a direct sum of two proper $T$-invariant subspaces.  $\V$ can be written as a direct sum $\V=\oplus_{i=1}^m \V_i$, where each $\V_i$ is $T$-indecomposable for $i=1,2, \ldots, m$. In general, this decomposition is not canonical. But for each $i$, $(\V_i, T|_{\V_i})$ is ``dynamically equivalent'' to $(\F[x]/(p(x)^k), \mu_x)$, where $p(x)$ is an irreducible monic factor of the minimal polynomial of $T$, and $\mu_x$ is the operator 
$[u(x)] \mapsto [xu(x)]$, for eg. see \cite{kul}.  Such $p(x)^k$ is an \emph{elementary divisor} of $T$. If $p(x)^k$ occurs $d$ times in the decomposition, we call $d$ the \emph{multiplicity} of the elementary divisor $p(x)^k$. 

\medskip Let $\chi_T(x)$ denote the characteristic polynomial of an invertible linear map $T$. Let
 $$\chi_T(x)=(x-1)^e (x+1)^f \chi_{oT}(x),$$ where $e, f \geq 0$, and $\chi_{oT}(x)$ has no root $1$, or $-1$.  The vector space $\V$ has a $T$-invariant decomposition $\V=\V_1 \oplus \V_{-1}\oplus \V_o$, where for $\lambda=1, -1$, $\V_{\lambda}$ is the generalized eigenspace to $\lambda$, i.e.
$$\V_{\lambda}=\{v \in \V\;|\;(T-\lambda I)^n v=0\}, $$
and $\V_o=ker \ \chi_{oT}(T)$.  Let $T_o$ denote the restriction of $T$ to $\V_o$. Clearly $T_o$ has the characteristic polynomial $\chi_{oT}(x)$ and  does not have any eigenvalue 1 or -1.

\medskip 
With the notations as given above, we prove the following theorem that answers the above question.  When  $J$ is the trivial automorphism, the following theorem descends to Theorem 1.1 of \cite{gk} in view of \lemref{cuni} in \secref{mainth}. 
\begin{theorem}\label{ansq1}   Let $\F$ be a field with characteristic different from two. Let $J: \alpha \to \alpha^J$ be a fixed non-trivial involutory automorphism on $\F$. Let $\V$ be a vector space over $\F$ of dimension at least $2$. Let $T: \V \to \V$ be an invertible linear map.  Then $\V$ admits a $T$-invariant non-degenerate $J$-hermitian, resp. $J$-skew-hermitian form if and only if  an elementary divisor of $T_o$ is either self-dual, or its dual is also an elementary divisor with the same multiplicity.\end{theorem}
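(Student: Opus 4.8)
\emph{Proof strategy.} The plan is to run the argument blockwise through the primary (generalized eigenspace) decomposition of $\V$ as an $\F[x]$-module, and to isolate the one step that is genuinely new compared with the symmetric and symplectic cases.

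I would begin with bookkeeping that is independent of the field and of $J$. If $B$ is a non-degenerate $J$-sesquilinear form on $\V$, let $\theta\mapsto\theta^{\dagger}$ be its adjoint on $\mathrm{End}_{\F}(\V)$, a conjugate-linear ring anti-automorphism; then $B$ is $T$-invariant precisely when $T^{\dagger}=T^{-1}$. Applying this to $(T-\lambda I)^{\dagger}$ gives the usual orthogonality relation: the generalized eigenspace $\V_{\lambda}$ is $B$-orthogonal to $\V_{\mu}$ unless $\lambda\mu^{J}=1$. Since $(x-1)^{\ast}=x-1$ and $(x+1)^{\ast}=x+1$, this forces $\V=\V_{1}\perp\V_{-1}\perp\V_{o}$, and inside $\V_{o}$ the primary component $\V_{(p)}$ of an irreducible $p$ is orthogonal to $\V_{(q)}$ unless $q=p^{\ast}$. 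Because $J$ is non-trivial there is a non-zero $\delta\in\F$ with $\delta^{J}=-\delta$, and multiplication by $\delta$ carries $T$-invariant non-degenerate $J$-hermitian forms to $J$-skew-hermitian ones and back, so it suffices to settle the $J$-hermitian case. Assembling these remarks, $\V$ carries a $T$-invariant non-degenerate $J$-hermitian form if and only if each orthogonally indecomposable constituent does, namely each $\V_{(p)}$ with $p=p^{\ast}$ (including $p=x\pm1$) and each $\V_{(p)}\oplus\V_{(p^{\ast})}$ with $p\neq p^{\ast}$.

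Next I would translate these two kinds of constituents into module theory. Let $\V^{\vee}$ be the $\F[x]$-module of conjugate-linear functionals on $\V$ with the action $(T\varphi)(v)=\varphi(T^{-1}v)$; then a $T$-invariant $J$-hermitian form on $W_{1}\oplus W_{2}$ vanishing on each summand is the datum of an $\F[x]$-homomorphism $W_{1}\to W_{2}^{\vee}$ (its values on $W_{2}\times W_{1}$ being then forced), which is non-degenerate exactly when that homomorphism is an isomorphism, and a direct computation gives $\big(\F[x]/(p(x)^{k})\big)^{\vee}\cong\F[x]/(p^{\ast}(x)^{k})$. Hence $\V_{(p)}^{\vee}$ has elementary divisors $(p^{\ast})^{k}$ with the multiplicities that $p^{k}$ has in $\V_{(p)}$. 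For $p\neq p^{\ast}$ it follows that $\V_{(p)}\oplus\V_{(p^{\ast})}$ admits such a form if and only if $p^{k}$ and $(p^{\ast})^{k}$ occur as elementary divisors of $T_{o}$ with equal multiplicity for every $k$; when they do, identifying $\V_{(p^{\ast})}$ with $\V_{(p)}^{\vee}$ and using the standard hyperbolic construction on $\V_{(p)}\oplus\V_{(p)}^{\vee}$ produces one. For $p=p^{\ast}$ (and for $p=x\pm1$) the isomorphism $\V_{(p)}\cong\V_{(p)}^{\vee}$ holds automatically, so a $T$-invariant non-degenerate $J$-sesquilinear form $B_{0}$ always exists on $\V_{(p)}$; the only remaining point is to replace $B_{0}$ by a $J$-hermitian one.

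That replacement is the step I expect to be the main obstacle. Define $S\in\mathrm{Aut}_{\F[x]}(\V_{(p)})$ by $B_{0}(v,u)^{J}=B_{0}(Su,v)$; then $S$ is unitary for the anti-involution $\dagger$ attached to $B_{0}$, and the form $(u,v)\mapsto B_{0}(\lambda u,v)$ is $T$-invariant, non-degenerate whenever $\lambda$ is a unit, and $J$-hermitian exactly when $\lambda^{\dagger}S=\lambda$. This is a Hilbert~90 type statement over the ring $\mathrm{End}_{\F[x]}(\V_{(p)})$: one takes $\lambda=I+S$ when that is invertible, and corrects using $\delta$ and Hilbert~90 over the residue field $\F[x]/(p)$ otherwise; it is precisely here that the non-triviality of $J$ is essential, and alternatively one can write the invariant $J$-hermitian forms down explicitly on the rational canonical blocks, following \cite{gk}. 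Applied to $\V_{1}$ and $\V_{-1}$ — which are built entirely from self-dual blocks — this is also what makes those parts impose no condition at all, unlike in the symmetric and symplectic settings. Combining everything, $\V$ carries a $T$-invariant non-degenerate $J$-hermitian form, equivalently a $J$-skew-hermitian one, if and only if every elementary divisor $p(x)^{k}$ of $T$ with $p$ not self-dual has $p^{\ast}(x)^{k}$ as an elementary divisor of $T$ with the same multiplicity; since $x\pm1$ are self-dual this is a condition on $T_{o}$ alone, which is the asserted equivalence.
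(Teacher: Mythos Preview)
Your strategy is sound and arrives at the theorem by a genuinely different route from the paper. The paper cites Wall and Sergeichuk for the necessary direction and then, for sufficiency, works block by block with explicit matrix computations: \lemref{cuni} writes down the invariant form on a Jordan block for $x\pm 1$, \lemref{p-inv1} does the same for a self-dual irreducible $p$ via the companion matrix and the recursion coming from $(C^{J})^{t}HC=H$, and \lemref{qsf} passes between hermitian and skew-hermitian on $\V_{o}$ using $H_{T}(u,v)=H((T-T^{-1})u,v)$. You instead prove necessity directly through the contragredient module $\V^{\vee}$ and the identification $\big(\F[x]/(p^{k})\big)^{\vee}\cong \F[x]/((p^{\ast})^{k})$, pass between hermitian and skew-hermitian once and for all by scaling by $\delta$ with $\delta^{J}=-\delta$, and replace the explicit construction on self-dual blocks by a hermitianisation argument of Hilbert~90 type. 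Your approach is more conceptual and treats $\V_{\pm 1}$ and the self-dual primary components of $\V_{o}$ uniformly; the paper's approach is more elementary and actually exhibits the forms.

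One point deserves tightening. You run the hermitianisation on the whole primary component $\V_{(p)}$: you produce $S\in\mathrm{Aut}_{\F[x]}(\V_{(p)})$ with $S^{\dagger}=S^{-1}$ and seek a unit $\lambda$ with $\lambda^{\dagger}S=\lambda$. The candidates $\lambda=I+S$ and $\lambda=\delta(I-S)$ do satisfy this identity, but over $\mathrm{End}_{\F[x]}(\V_{(p)})$, which is not local when $\V_{(p)}$ is decomposable, both can fail to be units simultaneously. The fix is already implicit in what you wrote: carry out the same construction on each $T$-indecomposable summand $\F[x]/(p^{k})$, where the endomorphism ring is local and $(I+S)+(I-S)=2$ forces one of the two candidates to be a unit, and then take the orthogonal sum. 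With that adjustment your Hilbert~90 step goes through cleanly and no appeal to the explicit block computations of \cite{gk} is needed.
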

We prove this theorem in the next section.

\medskip Suppose $T: \V \to \V$ is a linear map that admits  an invariant hermitian or skew-hermitian form $H$. Then canonical forms for $T$ is known in the literature, for example, see \cite{wall}. This provides the necessary condition in the above theorem. The focus of this article is the converse part, i.e. the sufficient conditions for a linear map $T$ to admit a non-degenerate hermitian form. 

After finishing this work, we found the papers by Sergeichuk \cite[Theorem 5]{s1}, \cite[Theorem 2.2]{s2}, where the author has obtained canonical forms for the pairs $(A, B)$, where $B$ is a non-degenerate form and $A$ is an isometry of $B$ over a field of characteristic not $2$. The work of Sergeichuk not only gives the necessary condition stated above, but the sufficient condition is also implicit there. However, it has not been stated in a precise form as in the above theorem. Sergeichuk's approach involves quivers to derive the results. Our approach is simpler here.

\section{Proof of \thmref{ansq1}} \label{mainth}
\medskip For a matrix $A=(a_{ij})$, let $A^J$ be the matrix $A^J=(a_{ij}^J)$. 
  Let $H$ be a $J$-sesquilinear form on $\V$. The form $H$ is \emph{$J$-hermitian} or simply \emph{hermitian}, resp. \emph{skew-hermitian}  if for all $u, v$ in $\V$, $H(u, v)=H(v, u)^J$, resp.  $H(u, v)=-H(v, u)^J$. For a linear map $T: \V \to \V$, we say $H$ is $T$-invariant or $T$ is an isometry of $H$ if for all $u, v \in \V$, $H(Tu, Tv)=H(u, v)$.

\begin{lemma}\label{cuni}
 Let $T: \V \to \V$ be a unipotent linear map with minimal polynomial $(x-1)^n$, $n \geq 2$. Let $\V$ be $T$-indecomposable.  
\begin{itemize} 
\item[(i)] If  $n$ is even, resp. odd, then  $\V$ admits a $T$-invariant skew-hermitian, resp. 
hermitian form.

\item[(ii)] 
If $n$ is even, resp. odd,  and  $x^J=-x$ has a non-zero solution in $\F$, then  $\V$ admits a $T$-invariant hermitian, resp. skew-hermitian form.
\end{itemize} 
\end{lemma}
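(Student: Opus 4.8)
The plan is to make the single Jordan block completely explicit and to exhibit an invariant form by hand. Since $\V$ is $T$-indecomposable and $T$ is unipotent with minimal polynomial $(x-1)^n$, the reduction recalled in \secref{intro} identifies $(\V,T)$ with $(\F[x]/((x-1)^n),\mu_x)$, so, writing $t=x-1$, we may assume $\V=\F[t]/(t^n)$ with $T$ equal to multiplication by $1+t$ and $\dim_{\F}\V=n$.

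The engine of the argument is the $\F$-algebra automorphism $\iota$ of $\V$ determined by $\iota(t)=-t(1+t)^{-1}$ (a polynomial in $t$ modulo $t^n$); equivalently $\iota(1+t)=(1+t)^{-1}$. One checks readily that $\iota^2=\mathrm{id}$, that $\iota$ has coefficients in the prime field (hence commutes with the coefficientwise action $q\mapsto q^J$ and fixes $1+t$), and that $\iota(t^{n-1})=(-1)^{n-1}t^{n-1}$. Let $\V=\V_+\oplus\V_-$ be the decomposition into $(\pm1)$-eigenspaces of $\iota$. The last computation says exactly that $t^{n-1}\in\V_+$ when $n$ is odd and $t^{n-1}\in\V_-$ when $n$ is even; this single parity is the source of the odd/even dichotomy in the statement.

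Put $\eta=(-1)^{n-1}$ and choose a linear functional $\ell\colon\V\to\F$ with $\ell\circ\iota=\eta\,\ell$ and $\ell(t^{n-1})\neq 0$ (possible precisely because $t^{n-1}\in\V_\eta$; e.g.\ $\ell(x)$ can be taken to be the coefficient of $t^{n-1}$ in $\tfrac12(x+\eta\,\iota(x))$). Define $H(p,q)=\ell\bigl(p\cdot\iota(q^J)\bigr)$. Then I would check, in this order: $H$ is $J$-sesquilinear; $H$ is $T$-invariant, because $\iota\bigl(((1+t)q)^J\bigr)=(1+t)^{-1}\iota(q^J)$ cancels the factor $1+t$ produced by $Tp$; $H$ is non-degenerate, because $(a,b)\mapsto\ell(ab)$ is the residue pairing on $\F[t]/(t^n)$, which is non-degenerate as soon as $\ell(t^{n-1})\neq0$, while $q\mapsto\iota(q^J)$ is bijective; and finally $H(p,q)=\eta\,H(q,p)^J$, which follows from the identity $\iota(a\,\iota(b))=b\,\iota(a)$ together with $\ell\circ\iota=\eta\,\ell$. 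Thus $H$ is hermitian when $\eta=1$, i.e.\ $n$ odd, and skew-hermitian when $\eta=-1$, i.e.\ $n$ even, which is part~(i). For part~(ii), if $\delta\in\F$ is nonzero with $\delta^J=-\delta$, then $\delta H$ is again $J$-sesquilinear, $T$-invariant and non-degenerate, and $(\delta H)(p,q)=-\eta\,(\delta H)(q,p)^J$, so multiplication by $\delta$ interchanges the hermitian and skew-hermitian cases and deduces part~(ii) from part~(i).

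The verifications that $\iota$ is a well-defined involutive algebra automorphism and that $H$ is non-degenerate are routine; the only load-bearing computation is $\iota(t^{n-1})=(-1)^{n-1}t^{n-1}$, and the one place demanding care is keeping track of how the coefficientwise $J$-conjugation interacts with $\iota$ and with $\ell$ in the proof that $H(p,q)=\eta\,H(q,p)^J$. I do not anticipate a genuine obstacle beyond this bookkeeping.
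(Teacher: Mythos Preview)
Your argument is correct and takes a genuinely different route from the paper. The paper works with an explicit Jordan block matrix for $T$, writes down the matrix equation $(T^J)^t H T=H$ entry by entry, and derives recursions forcing $H$ into an anti-triangular shape whose anti-diagonal entries are all determined by $a_{1,n}$; the hermitian/skew-hermitian condition then becomes the single scalar constraint $a_{1,n}^J=(-1)^{n+1}a_{1,n}$, from which the parity dichotomy and the role of a solution to $x^J=-x$ are read off. Your approach is coordinate-free: you work in the local ring $\F[t]/(t^n)$, introduce the algebra involution $\iota$ sending $1+t$ to $(1+t)^{-1}$, and build $H$ as a residue-type pairing twisted by $\iota$ and $J$. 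The parity emerges cleanly from the single computation $\iota(t^{n-1})=(-1)^{n-1}t^{n-1}$, and part~(ii) is obtained for free by scaling with $\delta$. The paper's method is more elementary and yields the explicit Gram matrix; yours explains structurally why the sign appears and would adapt more readily to other cyclic situations.

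Two small points of exposition. First, the parenthetical ``and fixes $1+t$'' is ambiguous: $\iota$ certainly does not fix $1+t$; what you need (and use) is that the coefficientwise $J$-action fixes $1+t$, so that $((1+t)q)^J=(1+t)q^J$. Second, in verifying $H(p,q)=\eta\,H(q,p)^J$ you implicitly use $\ell(y)^J=\ell(y^J)$; this holds for your particular $\ell$ (because the coefficient-of-$t^{n-1}$ functional commutes with $J$ and $\iota$ commutes with $J$), but it is an extra requirement on $\ell$ beyond $\ell\circ\iota=\eta\,\ell$ and $\ell(t^{n-1})\neq 0$, so it is worth stating explicitly.
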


\begin{proof} 

Let $T$ be an unipotent linear map. Suppose the minimal polynomial of $T$ is $m_T(x)=(x-1)^n$. Without loss of generality we can assume that $T$ is of the form
\begin{equation}\label{t}
T=\begin{pmatrix} 1 & 0& 0& 0 & .... \ 0 & 0\\ 1 & 1 & 0& 0 & .... \ 0& 0 \\ 0 & 1 & 1 & 0 & ....\ 0& 0\\ &\ddots & \ddots  & \ddots    &\ddots& \\ 0 &0 & 0 & \ldots & 1&0\\ 0 & 0 & 0 & \ldots & 1& 1 \end{pmatrix}
\end{equation}
Suppose $T$ preserves a $J$-sesquilinear form $H$. In matrix form, let $H=(a_{ij})$.
Then,  $(T^{J})^{t} H T =H$.
This gives the following relations: For $1 \leq i \leq n-1$,
\begin{equation}\label{1}
 a_{i+1,n}=0=a_{n, i+1},
\end{equation}
\begin{equation}\label{2'}
a_{i,j} + a_{i,j+1}+a_{i+1,j}+a_{i+1,j+1}=a_{i,j},
\end{equation}
\begin{equation}\label{2}
 \hbox{i.e. } \  a_{i,j+1}+a_{i+1,j}+a_{i+1,j+1}=0. \end{equation}
From the above two equations we have, for $1 \leq l \leq n-3$ and $l+2 \leq i \leq n-1$,
\begin{equation}\label{3}
a_{i,n-l}=0=a_{n-l,i}.
\end{equation}
This implies that $H$ is a  matrix of the form
\begin{equation}\label{can}
H= \begin{pmatrix} a_{1,1} & a_{1,2} & a_{1,3} & a_{1,4} & .... & a_{1, n-2} & a_{1,n-1} & a_{1,n}\\
a_{2,1} & a_{2,2} & a_{2,3} & a_{2,4} & .... & a_{2,n-2} & a_{2,n-1} & 0\\
a_{3,1} & a_{3,2} & a_{3,3} & a_{3,4} & .... & a_{3,n-2} & 0 & 0\\
\vdots& \vdots&\vdots  &  \vdots& .... &\vdots  & \vdots & \vdots \\
a_{n-1,1} & a_{n-1,2} & 0 & 0 & ....& 0 & 0 & 0 \\
a_{n,1} & 0 & 0 & 0 & ....& 0& 0 & 0 \end{pmatrix},
\end{equation}
where, 

\begin{equation*} \label{e1} a_{i+1, j} + a_{i,  j+1} + a_{i+1, j+1}=0. 
\end{equation*}
 Choose a basis $ {e_{1},e_{2},e_{3},...,e_{n}}$ of $ V $
 such that $ T $ and $ H $ has the above forms  with respect to the basis.
  From \eqnref{2} we have
 \begin{equation}\label{e1} a_{l,n-l+1}=(-1)^{n+1-2l} \ a_{n-l+1,l}.\end{equation} 
Note that it follows from \eqnref{2} that $a_{l, n-l+1}=(-1)^{l-1} a_{1, n}$ for $1 \leq l \leq n$. Hence $H$ is non-singular if and only if $a_{1, n} \neq 0$. Continuing the procedure, all entries of $H$ except $a_{1,1}$ can be expressed as a scalar multiple of $a_{1, n}$.

 For $H$ to be hermitian, from \eqnref{e1} we must have 
$ a_{l,n-l+1}^J=a_{n-l+1,l}=(-1)^{n+1}a_{l, n-l+1}$. This implies, $a_{1,n}^J=(-1)^{n+1}a_{1,n}$. So, a non-zero choice of $a_{1,n}$ is possible only if either $n$ is odd or, in case $n$ is even, then $ x^{J}=-x $ must have a solution in $ \F $, which is always the case. The other case is similar. 
\end{proof}

\begin{cor}
Let $\F$ be a field of characteristic different from $2$ such that $x^J=-x$ has a solution in $\F$. 
Let $\V$ be an $n$-dimensional vector space of dimension $\geq 2$ over $\F$. Let $T: \V \to \V$ be a unipotent linear map such that $\V$ is $T$-indecomposable.  Then $\V$ admits a $T$-invariant non-degenerate hermitian, as well as skew-hermitian form. 
\end{cor}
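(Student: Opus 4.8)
The plan is to read this off \lemref{cuni}. First I would note that, $T$ being unipotent, its minimal polynomial is $(x-1)^n$ for some $n$; since $\V$ is $T$-indecomposable, the structure theorem recalled in \secref{intro} identifies $(\V, T)$ with $(\F[x]/((x-1)^n), \mu_x)$, so $n = \dim \V \geq 2$ and $T$ may be taken in the shape \eqnref{t}. Hence the hypotheses of \lemref{cuni} are in force.

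Second, I would handle the two parities of $n$ by quoting the two parts of \lemref{cuni}. If $n$ is even, part (i) yields a $T$-invariant skew-hermitian form, and part (ii) --- whose auxiliary hypothesis that $x^J=-x$ has a non-zero solution is exactly what we are assuming --- yields a $T$-invariant hermitian form. If $n$ is odd, parts (i) and (ii) respectively yield a $T$-invariant hermitian and a $T$-invariant skew-hermitian form. In either case $\V$ carries a $T$-invariant form of each type.

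Third, I would note that these forms may be chosen non-degenerate, which follows from the proof of \lemref{cuni} rather than its statement: every such form has the shape \eqnref{can}, whose entries other than $a_{1,1}$ are prescribed scalar multiples of the single parameter $a_{1,n}$, and which is non-singular precisely when $a_{1,n}\neq 0$. In each of the cases above the constraint on $a_{1,n}$ is one of $a_{1,n}^J=a_{1,n}$ or $a_{1,n}^J=-a_{1,n}$, and both admit a non-zero solution under our hypothesis (the first trivially, the second by the assumption that $x^J=-x$ is solvable); choosing such an $a_{1,n}$ produces non-degenerate hermitian and skew-hermitian forms, as required.

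I do not expect a genuine obstacle here: the corollary is a bookkeeping consequence of \lemref{cuni}, the one point worth a sentence being the observation that the forms constructed there can be taken non-degenerate.
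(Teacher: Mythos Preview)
Your proposal is correct and matches the paper's approach: the corollary is stated without proof there, as an immediate consequence of \lemref{cuni}. Your added remark that non-degeneracy must be extracted from the \emph{proof} of \lemref{cuni} (via the choice $a_{1,n}\neq 0$) rather than its statement is a fair and accurate observation.
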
 

\begin{lemma}\label{qsf}
Let $T:\V \to \V$ be an invertible linear map. If $T$ has no eigenvalue $ 1$ or $-1$, then there exists a $T$-invariant non-degenerate hermitian form on $\V$ if and only if there exists a $T$-invariant non-degenerate skew-hermitian form on $\V$. 
\end{lemma}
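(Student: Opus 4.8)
The plan is to produce, given a $T$-invariant non-degenerate hermitian form, a $T$-invariant non-degenerate skew-hermitian form, and conversely; since the roles of hermitian and skew-hermitian are symmetric under multiplication by a suitable scalar, it suffices to exhibit one direction and note the argument reverses. The key observation is that if $H$ is $J$-hermitian and $c \in \F$ is a nonzero scalar, then $cH$ is a $T$-invariant $J$-sesquilinear form (invariance is unaffected by scaling), and $cH$ is skew-hermitian precisely when $(cH)(u,v) = -\,(cH)(v,u)^J$, i.e. $c\,H(u,v) = -c^J H(v,u)^J = -c^J H(u,v)$, which holds for all $u,v$ iff $c = -c^J$, that is, iff $c$ is a nonzero solution of $x^J = -x$. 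Since $J$ is assumed non-trivial throughout the relevant part of the paper, such a $c$ exists, and this gives a clean bijection $H \mapsto cH$ between $T$-invariant non-degenerate hermitian forms and $T$-invariant non-degenerate skew-hermitian forms.

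So the first step is to record that $x^J = -x$ has a nonzero solution $c$ in $\F$ — this is stated in the introduction as automatic whenever $J$ is non-trivial, and I would reduce to that case (if $J$ were trivial the statement would instead require the hypothesis that $-1$ be expressible appropriately, but the ambient hypotheses of the section make $J$ non-trivial). Second, given a $T$-invariant non-degenerate hermitian $H$, set $H' = cH$; check $J$-sesquilinearity of $H'$ (immediate, using $(cH)(u, \alpha v) = c\,\alpha^J H(u,v)$ etc.), check $T$-invariance ($H'(Tu,Tv) = c\,H(Tu,Tv) = c\,H(u,v) = H'(u,v)$), check non-degeneracy ($c \neq 0$ so the Gram matrix is scaled by a unit, hence still invertible), and check the skew-hermitian identity via the computation above. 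Third, observe the converse is symmetric: if $H'$ is skew-hermitian then $c^{-1}H'$ is hermitian by the same computation, since $c^{-1}$ also satisfies $(c^{-1})^J = -c^{-1}$ (indeed $(c^{-1})^J = (c^J)^{-1} = (-c)^{-1} = -c^{-1}$). This closes the equivalence.

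Notably, this lemma's statement as given does not actually use the hypothesis that $T$ has no eigenvalue $1$ or $-1$ — the scaling argument works for any invertible $T$ — so I would either simply not invoke that hypothesis, or remark that it is stated only because this is the context in which the lemma is applied in the proof of \thmref{ansq1}. The main (and really only) subtlety is the existence of the scalar $c$ with $c^J = -c$; everything else is a one-line verification. There is no genuine obstacle here: the difficulty, such as it is, lies entirely in correctly unwinding the sesquilinearity conventions so that the sign computation $c\,H(u,v) = -c^J H(u,v)$ comes out right, and in being careful that the fixed field of $J$ (where scaling would send hermitian to hermitian) is distinct from the $-1$-eigenspace of $J$ on $\F$ as an $\F^J$-vector space, which is exactly the content of characteristic $\neq 2$ together with $J$ non-trivial.
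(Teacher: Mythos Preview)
Your argument is correct, but it is not the paper's. The paper constructs the skew-hermitian form intrinsically from $T$: given a $T$-invariant non-degenerate hermitian $H$, it sets
\[
H_T(u,v)=H\bigl((T-T^{-1})u,\,v\bigr),
\]
checks $T$-invariance (since $T$ commutes with $T-T^{-1}$ and is an $H$-isometry), and computes $H_T(u,v)=-H_T(v,u)^J$. Non-degeneracy uses that $T-T^{-1}=T^{-1}(T-I)(T+I)$ is invertible, which is exactly where the hypothesis ``no eigenvalue $\pm 1$'' enters; so in the paper's proof that hypothesis is essential, not decorative. Your scaling trick $H\mapsto cH$ with $c^J=-c$ dispenses with the eigenvalue hypothesis entirely but leans instead on $J$ being non-trivial. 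Each approach buys something the other does not: the paper's works even when $J$ is the identity (recovering the symmetric/skew-symmetric passage of \cite{gk}), while yours works for any invertible $T$ once $J\neq\mathrm{id}$. Within the ambient hypotheses of \thmref{ansq1} (where $J$ is explicitly non-trivial) your proof is a valid and indeed shorter alternative; you are right that the eigenvalue condition then becomes superfluous for this lemma.
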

\begin{proof} Assume, $H$ is a $T$-invariant hermitian form. 
Define a form $H_T$ on $\V$ as follows:
$$\hbox{For }u, v \hbox{ in } \V, \;\;H_T(u, v)=H((T-T^{-1})u, v).$$ 
Note that 
\begin{eqnarray*}
H_T(u, v)&=&H((T-T^{-1})u, v)\\
&=&H(Tu, v)-H(T^{-1}u, v)\\
&=&H(u, T^{-1}v)-H(u, Tv), \hbox{ since $T$ is an isometry.}\\
&=&H(u, T^{-1}v-Tv)\\
&=&-H(u, (T-T^{-1})v)\\
&=&-H_T^J((T-T^{-1})v, u), \hbox{ since $H$ is hermitian.}\\
&=&-H^J_T(v,u).
\end{eqnarray*}
Thus $H_T$ is a $T$-invariant  non-degenerate skew-hermitian form on $\V$.  Also it follows by the same construction that corresponding to each $T$-invariant skew-hermitian form, there is a canonical $T$-invariant hermitian form.
\end{proof}
\begin{lemma}\label{p-inv1}
 Let $T: \V \to \V$ be an invertible  linear map with characteristic polynomial $\chi_T(x)=p(x)^d$, where $p(x) \neq x \pm 1$, is irreducible over $\F$, and is self-dual. Let $\V$ be $T$-indecomposable. Then there exists a $T$-invariant non-degenerate hermitian, resp. skew-hermitian form on $\V$. 
\end{lemma}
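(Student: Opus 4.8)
The situation is that $T$ acts on the $T$-indecomposable space $\V$ with characteristic polynomial $p(x)^d$, where $p$ is irreducible, self-dual, and distinct from $x\pm 1$. Since $\V$ is $T$-indecomposable, the minimal polynomial of $T$ equals $p(x)^d$ and $\V \cong \F[x]/(p(x)^d)$ with $T$ acting as multiplication by $x$; equivalently, fixing a root $\alpha$ of $p$ in an extension, $T$ acts on $\V$ as a single Jordan-type block over the field $\K = \F[x]/(p(x))$. The natural move is to transfer the problem to $\K$: the self-duality of $p$ says precisely that $J$ extends to an involution on $\K$ sending $\alpha \mapsto (\alpha^J)^{-1}$ (since $(\alpha^J)^{-1}$ is the conjugate root of the self-dual $p$), and under this extended involution $\V = \K[x]/(x-\alpha)^d$ carries $T$ as multiplication by $x$. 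So the plan is: first, set up the extension $\K$ and the extended involution $\bar J$; second, build a $\bar J$-sesquilinear $T$-invariant form over $\K$; third, compose with the trace form $\mathrm{tr}_{\K/\F}$ to push the form back down to $\F$ while retaining $J$-hermitian (resp. $J$-skew-hermitian) symmetry and non-degeneracy.

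\textbf{Key steps in order.} First I would record that $\V$, being $T$-indecomposable with $\chi_T = p^d$, is a cyclic $\F[T]$-module, hence isomorphic to $\F[x]/(p(x)^d)$ with $T = \mu_x$; choose a cyclic vector $v_0$ so that $\{v_0, Tv_0, \ldots, T^{N-1}v_0\}$ is an $\F$-basis ($N = d\cdot\deg p$). Second, I would solve for an invariant form directly in this basis in the style of \lemref{cuni}: writing $H = (a_{ij})$, the isometry condition $(T^J)^t H T = H$ together with $p^\ast = p$ forces an anti-diagonal-supported structure, and the free parameter (the analogue of $a_{1,n}$ above) can be chosen subject to a single scalar equation of the form $\theta^J = \pm\theta$, where the sign is governed by the parity data of $p$ and $d$. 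Third, I would invoke that $x^J = -x$ has a non-zero solution in $\F$ — which holds automatically since $J$ is non-trivial (stated in the introduction) — so that the equation $\theta^J = -\theta$ is solvable when the sign is $-1$, and $\theta^J = \theta$ is trivially solvable when the sign is $+1$; in either parity case one can therefore realize both a hermitian and a skew-hermitian invariant form. Fourth, I would check non-degeneracy: the constructed $H$ is anti-triangular with the chosen non-zero $\theta$ along the anti-diagonal in an appropriate block sense, hence invertible.

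\textbf{Alternative, cleaner route.} Rather than a bare-hands matrix computation, the slicker argument is the field-extension one: let $\K = \F[x]/(p(x))$, let $\beta = [x] \in \K$, and use self-duality of $p$ to define $\bar J$ on $\K$ by $\bar J|_\F = J$ and $\beta^{\bar J} = \beta^{-1}$ (well-defined because $p^\ast = p$ means $\beta^{-1}$ satisfies the same minimal polynomial over $\F$ as $\beta^J$). Then $T$ on $\V \cong \K^d$ is the unipotent-up-to-scalar Jordan block $\beta\,(I + N)$ with $N$ nilpotent, and one first produces a $\bar J$-sesquilinear $T$-invariant form over $\K$: the pairing $K_0$ with matrix the anti-diagonal (adjusted by signs from the $(I+N)$ part exactly as in \eqnref{can}) works once one picks a scalar $c \in \K$ with $c^{\bar J} = \pm c$. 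Finally, set $H(u,v) = \mathrm{tr}_{\K/\F}\big(K_0(u,v)\big)$; since $\mathrm{tr}_{\K/\F}$ intertwines $\bar J$ with $J$ and is a non-degenerate $\F$-bilinear functional on $\K$ (as $\K/\F$ is separable — $p$ irreducible, $\mathrm{char}\,\F\neq 2$ is not quite enough for separability in general, so here I would instead note $p$ has distinct roots because $p$ and $p^\ast$ share roots and... in fact I would just verify non-degeneracy of $\mathrm{tr}_{\K/\F}$ directly or restrict the claim, or fall back to the matrix argument which avoids this), the resulting $H$ is $T$-invariant, non-degenerate, and of the desired symmetry type.

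\textbf{Main obstacle.} The delicate point is the trace/separability issue in the field-extension route, and — in the direct matrix route — correctly bookkeeping the sign $(-1)^{?}$ that multiplies the free scalar in the hermitian symmetry constraint: here $p$ is a block, not a single eigenvalue, so the sign mixes the anti-diagonal parity (depending on $N = d\deg p$) with how $\bar J$ acts within $\K$ on the entries coming from powers of $\beta$. The crucial observation that rescues everything is simply that $J$ non-trivial guarantees a solution of $x^J = -x$, so \emph{whatever} sign the constraint produces, a non-zero solution exists — hence both the hermitian and skew-hermitian forms can always be built, and I would structure the proof to isolate this scalar equation as early as possible and then dispatch it with that one remark.
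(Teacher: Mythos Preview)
Your direct matrix route mis-predicts the shape of $H$. In the cyclic basis $\{1,x,\dots,x^{k-1}\}$ of $\F[x]/(p(x)^d)$ the map $T$ is the companion matrix $C$, and invariance gives $h_{ij}=H(e_i,e_j)=H(Te_i,Te_j)=h_{i+1,j+1}$ for $1\le i,j\le k-1$: the matrix of $H$ is \emph{Toeplitz} (constant along diagonals), not anti-diagonal. This is exactly how the paper argues. It writes a candidate $H=H_1+H_2$ with $H_1$ lower-triangular Toeplitz (entries $\beta_1,\dots,\beta_k$) and $H_2$ upper-triangular Toeplitz (entries $\alpha_1,\dots,\alpha_k$), and imposes $(C^J)^t H_iC=H_i$ separately; self-duality of $p(x)^d$ (that is, $c_i=c_{k-i}^J/c_0^J$ and $c_0c_0^J=1$) makes each system consistent, with all $\beta_i$ determined by $\beta_1$ through a recursion in the $c_j/c_0$, and all $\alpha_i$ by $\alpha_1$ through the $J$-conjugate recursion. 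Choosing $\beta_1=\alpha_1^J$ then forces $\beta_i=\alpha_i^J$ for every $i$, so $H$ is hermitian; choosing $\beta_1=-\alpha_1^J$ gives skew-hermitian. Crucially, no parity-dependent constraint of the form $\theta^J=\pm\theta$ ever appears: both symmetry types are available for every $d$ and every $\deg p$, with $\alpha_1$ a free nonzero parameter. (This is consistent with \lemref{qsf}: since $\pm1$ are not eigenvalues, invariant hermitian and skew-hermitian forms exist or fail together.) So the ``sign governed by the parity data of $p$ and $d$'' that you planned to resolve with a solution of $x^J=-x$ is a phantom in this setting; it is an artifact of importing the anti-diagonal picture from \lemref{cuni} into a basis where it does not hold.

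Your field-extension route is a genuinely different argument from the paper's and is structurally sound when it applies: extend $J$ to $\bar J$ on $\K=\F[x]/(p(x))$ by $\beta^{\bar J}=\beta^{-1}$ (well-defined because $p=p^\ast$), reduce to a single Jordan block with $\bar J$-unitary eigenvalue over $\K$, apply the analogue of \lemref{cuni} there (and $\bar J$ is non-trivial on $\K$ since $\beta\neq\beta^{-1}$, so the sign equation in $\K$ is always solvable), then descend via $H=\operatorname{tr}_{\K/\F}\circ K_0$. The obstacle you flagged is real: separability of $\K/\F$ is needed both for the identification $\F[x]/(p(x)^d)\cong\K[z]/(z-\beta)^d$ and for non-degeneracy of the trace form, and $\operatorname{char}\F\neq2$ alone does not give it. The paper's Toeplitz computation sidesteps this entirely, which is its concrete advantage; conversely, your route explains conceptually why self-duality of $p$ is exactly the hypothesis that lets $J$ extend to $\K$ so that $T$ becomes $\bar J$-unitary.
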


\begin{proof}
Since $\V$ is $T$-indecomposable, $(\V, T)$ is dynamically equivalent to the pair $\big(\F[x]/(p(x)^d)$, $\mu_x \big)$, where $\mu_x $ is the operator
$\mu_: [u(x)] \mapsto [xu(x)]$, cf.  \cite{kul}. Hence without loss of generality we assume $\V=\F[x]/(p(x)^d)$, $T=\mu_x$ and let
$$\mathcal B=\{e_1=1, e_2=x, \ldots, e_{k}=x^{k-1}\}, ~k=d. \deg p(x).$$
be the corresponding basis.

 Let 
$\chi_T(x)=p(x)^d=\sum_{i=0}^{k} c_i x^i$. Since $\chi_T(x)$ is self-dual, we must have $c_i=\frac{c_{k-i}^J}{c_0^J}$, $1 \leq i \leq k-1$, $c_0 c_0^J=1$.   If possible, suppose $H=(h_{ij})$ be a $T$-invariant sesquilinear form on $\V$. Then
$$h_{ij}=H(e_i, e_j)=H(\mu_x e_i, \mu_x e_j)=h_{i+1, j+1}, ~1 \leq i \leq k, ~ 1 \leq j \leq k.$$
Hence,  a possible $T$-invaraint sesquilinear form should be represnted necessarily by a matrix of the following form:
\begin{equation} \label{hf1} X=\begin{pmatrix} \alpha_1 & \alpha_2 & \alpha_3 & \ldots & \alpha_{k-1} & \alpha_{k} \\ \beta_2 & \alpha_1 & \alpha_2 & \ldots & \alpha_{k-2} & \alpha_{k-1} \\ \beta_3 & \beta_2 & \alpha_1 & \ldots & \alpha_{k-3}& \alpha_{k-2} \\
\vdots & \vdots & \vdots& \vdots& \vdots& \vdots \\ \beta_{k-1} & \beta_{k-2} & \beta_{k-3} & \ldots & \alpha_1 & \alpha_2    \\ \beta_{k} & \beta_{k-1} & \beta_{k-2} & \ldots & \beta_2 & \alpha_1 \end{pmatrix}.\end{equation}

Let
$$\mathcal S=\{A=(a_{ij}) \in M_{k}(\F) \ | \   (T^J)^t A T=A \}.$$
Thus $T$ has an invariant form if and only if $\mathcal S \neq \phi$.

 Let $C$ be the companion matrix of $\mu_x$ given by:
$$C=\begin{pmatrix} 0 & 0 & 0 & \ldots & 0 & -c_0\\ 1 & 0 & 0 & \ldots & 0 & -c_1\\ 0 & 1 & 0 & \ldots & 0 & -c_2 \\ \vdots & \vdots & \vdots& \vdots & \vdots \\ 0 & 0 & 0 & \ldots & 1 & -c_{k-1} \end{pmatrix}. $$

Let
$$H_1=\begin{pmatrix} \beta_1 & 0& 0 & \ldots & 0 & 0\\ \beta_2 & \beta_1 & 0 & \ldots & 0& 0 \\ \beta_3 & \beta_2 & \beta_1 & \ldots &0 & 0 \\
\vdots & \vdots & \vdots& \ldots& \vdots & \vdots \\ \beta_{k-1} & \beta_{k-2} & \beta_{k-3} & \ldots  & \beta_1& 0 \\ \beta_{k} & \beta_{k-1} & \beta_{k-2} & \ldots  & \beta_2& \beta_1 \end{pmatrix}.$$

\medskip We consider the equation $(C^J)^t H_1 C=H_1$. Simplifying the left hand side, we get
$$(C^J)^t H_1 C=\begin{pmatrix} \beta_1 & 0 & 0 & 0 &  \ldots & 0 & a_1\\
\beta_2 & \beta_1 & 0 & 0 &  \ldots & 0 & a_2 \\ \beta_3 & \beta_2 & \beta_1 & 0 &  \ldots & 0 & a_3 \\ \beta_4 & \beta_3 & \beta_2 & \beta_1 & \ldots& 0 & a_4 \\ \vdots & \vdots& \vdots& \vdots& \ldots & \vdots& \vdots\\ \beta_{k-1} & \beta_{k-2} & \beta_{k-3} & \beta_{k-4} & \ldots & \beta_1 & a_{k-1} \\ b_{k-1}& b_{k-2}& b_{k-3} & b_{k-4}& \ldots & b_{1} & -\sum_{i=0}^{k-1} c_i b_{k-i} \end{pmatrix},$$
where $$a_i= -c_0\beta_{i+1} - c_1 \beta_i - c_2 \beta_{i-1} - \ldots - c_{i-1} \beta_1, $$
$$b_{i}=-{c}^J_{k-1}\beta_{i} - {c}^J_{k-2}\beta_{i-1}  - \ldots - {c}^J_{k-i} \beta_{1}.$$
Comparing both sides of $(C^J)^t H_1 C=H_1$ gives
\begin{equation}\label{a1}  
\beta_{i+1}=\frac{1}{c_0}( - c_1 \beta_i - c_2 \beta_{i-1} - \ldots - c_{i-1} \beta_1), ~ 1 \leq i \leq k-2. 
\end{equation}
Now, by back substitution it is easy to see that all $\beta_i$ can be expressed as a multiple of $\beta_1$ by an  expression in $\frac{c_1}{c_0}, \ldots, \frac{c_{k-1}}{c_0}$. 
Next, consider
$$
H_2 = \begin{pmatrix} \alpha_1 & \alpha_2 & \alpha_3 & \ldots &\alpha_{k-1} &  \alpha_{k} \\ 0 & \alpha_1 & \alpha_2 & \ldots & \alpha_{k-2} & \alpha_{k-1} \\  0 & 0 & \alpha_1 & \ldots &\alpha_{k-3} &  \alpha_{k-2} \\
\vdots & \vdots & \vdots& \ldots& \vdots& \vdots \\ 0 & 0 & 0 & \ldots & \alpha_1& \alpha_2 \\ 0 & 0 &0 & \ldots & 0 & \alpha_1 \end{pmatrix}. $$
Comparing both sides of the equation $(C^J)^t H_2 C=H_2$ we get
\begin{equation} \label{a2} 
c_0^J\alpha_{i+1}=-c_1^J \alpha_{i} - -c^J_2 \alpha_{i-1}--c^J_3 \alpha_{i-2}- \ldots - c^J_{i-1} \alpha_{1},  ~~ 1 \leq i \leq k-1. 
\end{equation}
By  back substitution it is easy to see that each $\alpha_i$ can be expressed as a multiple of $\alpha_1$ by an  expression in $\frac{c_1^J}{c_0^J}, \ldots, \frac{c_{k-1}^J}{c_0^J}$.

Thus $H_1$ and $H_2$ are elements from the set $\mathcal S$ and for $\beta_1 \neq 0$, resp. $\alpha_1 \neq 0$, they give non-degenerate sesquilinear forms.  We also see that $H=H_1 + H_2$ is an element in the set $\mathcal S$ and is of the form \eqnref{hf1}.
If we choose $\beta_1 =\alpha_1^{J}$, it follows from \eqnref{a1} and \eqnref{a2} that $\alpha_{i+1}=\beta_{i+1}^J$, $1 \leq i \leq k-1$, and hence the form $H$ is hermitian. If we choose $\beta_1=- \alpha^{J}_1$, it follows $H$ is skew-hermitian. It can be seen  that  $H$ may be chosen to be non-degenerate. 
\end{proof}
\medskip 

\begin{remark} 
We would like to clarify a small inaccuracy in the statement of \cite[Lemma 3.2(i)]{gk}. It has been stated there that  for $T$ unipotent and $(\V, B)$ a $T$-indecomposable bilinear space,  the bilinear form $B$ degenerate implies $B=0$. This statement needs slight modification. For example, if $\V$ is of dimension $2$, 
$$T=\begin{pmatrix} 1 & 0 \\ 1 & 1 \end{pmatrix}, ~ \hbox{ and } ~~B=\begin{pmatrix} 1 & 0 \\ 0 & 0 \end{pmatrix},$$
then $\V$ is $T$-indecomposable, preserves $B$ which is degenerate, but $B \neq 0$.  Following similar computations as in  the proof of \lemref{cuni}, we can modify the statement as follows: with the hypothesis as in Lemma 3.2 of \cite{gk}, $B$ degenerate implies that the radical of $B$ is a co-dimension one subspace of $\V$. What had been used in the relevant parts of \cite{gk}, especially in the proof of Theorem 1.3., is actually the following lemma. 

\begin{lemmau}\label{unind}
Let $\V$ be a vector space equipped with a non-degenerate symmetric or skew-symmetric bilinear form $B$ over a field $\F$ of large characteristic. 
 Suppose $T: \V \to \V$ is a unipotent isometry. Let $\W$ be a $T$-indecomposable subspace of $\V$.  Then either $B|_{\W}=0$ or,  $B|_{\W}$ is non-degenerate.  
\end{lemmau}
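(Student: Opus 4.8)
The plan is to reduce the statement to a single unipotent Jordan block and then reuse the matrix computation from the proof of \lemref{cuni} (with $J$ trivial), calling on non-degeneracy of $B$ on $\V$ only at the very last step. Write $n:=\dim\W$; the case $n=1$ is trivial, so assume $n\ge 2$. Since $T$ is unipotent and $\W$ is $T$-indecomposable, $T|_\W$ is cyclic with minimal polynomial $(x-1)^n$, so we may fix a basis $e_1,\dots,e_n$ of $\W$ in which $T|_\W$ has the shape \eqnref{t}; then the only $T$-invariant subspaces of $\W$ are the members $\W_j=\ker\big((T-I)^j|_\W\big)$ of the chain $0=\W_0\subset\W_1\subset\cdots\subset\W_n=\W$. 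Because $T$ is an isometry and $\W$ is $T$-invariant, the radical $R:=\mathrm{rad}(B|_\W)=\W\cap\W^{\perp}$ is $T$-invariant, hence $R=\W_r$ for a unique $r$, $0\le r\le n$; the lemma is exactly the assertion $r\in\{0,n\}$.

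Running the computation in the proof of \lemref{cuni} with $H=B|_\W$ symmetric (resp. skew-symmetric) and $J$ trivial puts $B|_\W$ into the anti-triangular Toeplitz form \eqnref{can}, with $a_{l,n+1-l}=(-1)^{l-1}a_{1,n}$, and, once $a_{1,n}=0$, with $a_{ij}=0$ for all $i+j\ge n+1$. Hence $B|_\W$ is non-degenerate precisely when $a_{1,n}\ne 0$, which settles the case $r=0$. If $a_{1,n}=0$, the vanishing $a_{nj}=0$ for all $j$ shows that the socle generator $e_n$ lies in $R$, so $r\ge 1$; what remains is to exclude $1\le r\le n-1$.

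So assume $1\le r\le n-1$, so $e_n\in R\subsetneq\W$, and, since $n\ge 2$, write $e_n=(T-I)e_{n-1}$ with $e_{n-1}\in\W$. From $B(Tx,Ty)=B(x,y)$ one obtains the identity $B\big((T-I)x,y\big)=B\big(x,(T^{-1}-I)y\big)$; taking $x=e_{n-1}$ yields $B(e_n,y)=B\big(e_{n-1},(T^{-1}-I)y\big)$ for every $y\in\V$, and as $y$ runs over $\V$ the vector $(T^{-1}-I)y=-T^{-1}(T-I)y$ runs over all of $\mathrm{Im}(T-I)$. Thus $B(e_n,\V)=B\big(e_{n-1},\mathrm{Im}(T-I)\big)$, and the argument closes once one proves $e_{n-1}\perp\mathrm{Im}(T-I)$: this forces $B(e_n,\V)=0$, i.e. $e_n\in\mathrm{rad}(B)=0$, contradicting $e_n\ne 0$, whence $r\in\{0,n\}$. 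To prove that orthogonality I would write a typical element of $\mathrm{Im}(T-I)$ as $(T-I)z$, transfer the factor $T-I$ back onto $e_{n-1}$ by the same identity, and iterate, reducing everything to pairings among $e_1,\dots,e_n$ that are controlled by the vanishing $a_{ij}=0$ for $i+j\ge n+1$ in \eqnref{can}. This last step is the one I expect to be the genuine obstacle: the Toeplitz relations of \eqnref{can} must be tracked along the full $T$-orbit of $\V$ and not merely inside $\W$, and one has to use the hypothesis $0\ne R\ne\W$ in an essential way — separating the ``non-degenerate directions'' $e_1,\dots,e_{n-r}$ from the ``radical directions'' $e_{n-r+1},\dots,e_n$ — since this is precisely the delicate point that already forced the correction to \cite{gk} discussed above. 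Carrying out that bookkeeping carefully is where essentially all the work lies.
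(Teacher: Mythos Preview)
The paper gives no proof here; it only says the argument is a slight modification of Lemma~2.2(i) in \cite{g}. Your proposal, however, has a structural gap precisely where you anticipate it, and the plan as outlined cannot be completed. The identity $B(e_n,y)=B\big(e_{n-1},(T^{-1}-I)y\big)$ is correct and does yield $B(e_n,\V)=B\big(e_{n-1},\operatorname{Im}(T-I)\big)$, but your scheme for proving $e_{n-1}\perp\operatorname{Im}(T-I)$ by ``transferring $(T-I)$ back'' is circular: applied in reverse the same identity gives $B\big(e_{n-1},(T-I)z\big)=B\big((T^{-1}-I)e_{n-1},z\big)=-B(T^{-1}e_n,z)=-B(e_n,Tz)$, so the claim $e_{n-1}\perp\operatorname{Im}(T-I)$ is \emph{equivalent} to $e_n\perp\V$ --- the very conclusion you seek. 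The entries $a_{ij}$ cannot break this loop because they record only $B|_\W$, whereas the second argument here ranges over all of $\V$; and since your contradiction argument never actually invokes $r\le n-1$, a working argument of this shape would also forbid $r=n$.

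Indeed, read literally (with $\W$ an arbitrary $T$-indecomposable \emph{subspace} rather than a summand), the lemma is false, which explains the obstruction. Take $\V=\F^4$, $T$ a single unipotent Jordan block, and $B$ a $T$-invariant non-degenerate alternating form; in the basis of \eqnref{t} the matrix \eqnref{can} has $a_{1,4}\ne 0$. Then $\W:=\ker(T-I)^3=\langle e_2,e_3,e_4\rangle$ is $T$-indecomposable (a single size-$3$ block), $B(e_2,e_3)=a_{2,3}=-a_{1,4}\ne 0$, yet $e_4\in\operatorname{rad}(B|_\W)$ --- so $B|_\W$ is neither zero nor non-degenerate. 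What is evidently intended, and what the application in \cite{gk} requires, is the statement for a $T$-indecomposable \emph{direct summand} of $\V$; under that stronger hypothesis the argument in \cite{g} proceeds by playing $\W$ off against a $T$-invariant complement, which is a different mechanism from the one you propose.
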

 The proof of the lemma is a slight modification of the proof of Lemma 2.2(i) in \cite{g}. 
\end{remark} 

\medskip

\subsection{Proof of \thmref{ansq1}} \begin{proof}
 Suppose that the linear map $T$ admits an invariant non-degenerate hermitian, resp skew-hermitian,  form  $H$.
 Then the necessary condition follows from existing literatures, for example see \cite{wall, s1, s2}. 

\medskip Conversely, let  $\V $  be  a  vector  space  of  dim $  n\ge 2 $ over  the  field
$ \F $ and $ T : \V \longrightarrow \V $ an  invertible  map  such  that  the an
elementary  divisor of $ T_o $ is   either self-dual or, its dual is also an elementary divisior. For an elementary divisor $h(x)$, let $\V_h$ denote the $T$-indecomposable subspace  isomorphic to $\F[x]/(h(x))$.
From the structure theory of linear maps, for eg. see \cite{jac}, \cite{kul},  it follows that $\V$ has a primary decomposition of the form 
\begin{equation}\label{x}
\V=\bigoplus_{i=1}^{m_1}\V_{f_i} \oplus \bigoplus_{j=1}^{m_2} (\V_{g_i} \oplus \V_{g_i^{\ast}}),
\end{equation} 
where  for each $i=1,2,...,m_1$,   $f_i(x)$ is either self-dual, or one of $(x+1)^k$ and $(x-1)^k$, for each $j=1,2,...,m_2$, $g_i(x)$, $g_i^{\ast}(x)$ are dual to each other and $g_i(x)\neq g_i^{\ast}(x)$.  To prove the theorem it is sufficient to induce a $T$-invariant hermitian (resp. skew-hermitian) form on each of the summands. 

Let $\W$ be an $T$-indecomposable summand in the above decomposition and let $p(x)^k$ be the corresponding elementary divisor. Suppose $p(x)^k$ is self-dual. It follows from \lemref{p-inv1} that there exists a $T$-invariant non-degenerate hermitian, as well as skew-hermitian form on $\W$. 

  Suppose $p(x)^k$ is not self-dual. Then there is a dual elementary divisor $p^{\ast}(x)^k$. $\W_p=\hbox{ker }p(T)^k$, $\W_{p^{\ast}}=\hbox{ker }p^{\ast}(T)^k$. Then $\W_{p^{\ast}}$ can be considered as dual to $\W_p$ and the dual pairing gives a $T$-invariant non-degenerate form $h$ on $\W_p \oplus \W_{p^{\ast}}$, where $h|_{\W_p}=0=h|_{\W_{p^{\ast}}}$. 

 Suppose, $p(x)^k=(x-1)^k$.  Then the respective forms are obtained from \lemref{cuni}.  Suppose $p(x)^k=(x+1)^k$. Let $T_w$ denote the restriction of $T$ to $\W$. Then the minimal polynomial of $T_w$ is $(x+1)^k$. Thus the minimal polynomial of $-T_w$ is $(x-1)^k$. Further $T_w$ preserves a hermitian (resp. skew-hermitian) form if and only if $-T_w$ also preserves it. Thus this case reduces to the previous case and the existence of the required forms are clear. 

This completes the proof. \end{proof}

\begin{ack}
The second-named author gratefully acknowledges a UGC non-NET {fellowship} from Jadavpur University. He is highly grateful to his Ph.D advisor {Professor }Sujit Kumar Sardar for constant guidance, support, kindness and encouragements. He is also grateful to {Professor }K. C. Chattopadhyay for everything in life that has transformed him into a student of Mathematics. He is also thankful to Dr. Pralay Chatterjee and Dr. R. N. Mukherjee for their encouragements and help at various stages of life. 
\end{ack}

\end{document}